\newtheorem{theorem}{Theorem}[section]
\newtheorem{corollary}[theorem]{Corollary}
\newtheorem{proposition}[theorem]{Proposition}
\theoremstyle{definition}
\newtheorem{definition}[theorem]{Definition}
\newtheorem{remark}[theorem]{Remark}
\newtheorem{condition*}[theorem]{Condition}
\newtheorem*{acknowledgement}{Acknowledgements}       
\numberwithin{equation}{section}
\begin{document}

\title[Uniform estimates of nonlinear spectral gaps]{Uniform estimates of nonlinear spectral gaps}
\author
[T. Kondo]{Takefumi Kondo}
\author
[T. Toyoda]{Tetsu Toyoda}

\address
[Takefumi Kondo]
{\endgraf Graduate School of Mathematics, 
\endgraf
Nagoya University, Chikusa-ku, Nagoya, 464-8602, Japan}
\email
{kondo.takefumi@math.nagoya-u.ac.jp}

\address
[Tetsu Toyoda]
{\endgraf Suzuka National College of Technology, 
\endgraf
Shiroko-cho, Suzuka, Mie, 510-0294, Japan}
\email
{toyoda@genl.suzuka-ct.ac.jp}

\date{}

\keywords{Nonlinear spectral gap, Path method}
\subjclass[2010]{Primary 05C12; Secondary 51F99, 60J10}

\begin{abstract}
By generalizing the path method, we show that nonlinear spectral gaps of a finite connected graph 
are uniformly bounded from below by a positive constant which is independent 
of the target metric space. 
We apply our result to an $r$-ball $T_{d,r}$ in the $d$-regular tree, and observe 
that the asymptotic behavior 
of nonlinear spectral gaps of $T_{d,r}$ as $r\to\infty$
does not depend on the target metric space, 
which is in contrast to the case 
of a sequence of expanders. 
We also apply our result to the $n$-dimensional Hamming cube $H_n$ 
and obtain an estimate of its nonlinear spectral gap with respect to 
an arbitrary metric space, which is asymptotically sharp as $n\to\infty$. 
\end{abstract}

\maketitle

\section{Introduction}\label{into-sec}

Let $G=(V,E)$ be a graph, whose set 
of vertices and unoriented edges are denoted by $V$ and $E$, respectively. 
In this paper, graphs are always assumed to be simple, 
connected and finite. 
We denote by $\vec{E}$ the set $\{ (x,y)\in V\times V\hspace{1mm}|\hspace{1mm}\{ x,y\}\in E\}$.  
A {\em weight function} on $G$ is a symmetric function 
$m :V\times V\to\lbrack 0,\infty )$ 
whose support equals $\vec{E}$. 
The pair $(G,m)$ is called a {\em weighted graph}. 
A weight function $m$ induces a weight 
$m(x)$ of each vertex $x\in V$ by $m(x)=\sum_{y\in V}m(x,y)$. 
We use the convention that $m(\emptyset )=\sum_{x\in V}m(x)$. 
We call the following special weight function $m$ the {\em uniform} weight function on $G$:  
$$
m(x,y)=
\begin{cases}
1,&\quad \textrm{if}\quad (x,y)\in \vec{E}, \\
0,&\quad \textrm{otherwise} .
\end{cases}
$$

Throughout this paper, 
graphs and metric spaces are always assumed to contain at least two distinct points. 

\begin{definition}
Let $(G,m)$ be a weighted graph, 
and $(X,d_X)$ be a metric space. 
We define the {\em nonlinear spectral gap} $\lambda (G,X)$ 
of $G$ with respect to $X$ to be the reciprocal of the smallest constant $C >0$
such that the following Poincar\'e inequality holds for any $f: V\to X$: 
$$
\frac{1}{m (\emptyset )}\sum_{x,y\in V}m(x) m(y) d_X (f(x),f(y))^2
\leq
C
\sum_{x,y\in V}m(x,y)d_X (f(x),f(y))^2 .
$$
\end{definition}
Although the notation 
$\lambda_1^{\mathrm{Gro}}(G,X)$ was used for $\lambda(G,X)$ in the first author's previous paper \cite{K2}, 
we use this notation to avoid the notational complexity. 
We will see later that the constant $C>0$ in the definition always exists. 
Hence nonlinear spectral gaps are always positive real numbers. 
If $X=\mathbb{R}$, 
$\lambda (G, \mathbb{R})$ equals 
the first positive eigenvalue $\mu_1 (G)$ of the combinatorial Laplacian $\Delta$ on $G$, 
which acts on a real-valued function $f$ on $V$ as 
$$
\Delta f (x)
=
f(x)-
\sum_{y\in V}\frac{m(x,y)}{m(x)}f(y), \quad 
x\in V. 
$$

Nonlinear spectral gaps play important roles both 
in geometric group theory and metric geometry. 
For example, they are related 
to the fixed point property of discrete groups (\cite{G}, \cite{IKN}, \cite{IKN2}, \cite{IN},\cite{NS}, \cite{Wa}), measure concentration of metric measure 
spaces (\cite{G2}) and coarse embeddability of a metric space into another metric space (\cite{G}, \cite{MN}, \cite{MN2}). 

Let us see the difference between $\lambda (G,X)$ and 
$\mu_{1}(G)=\lambda (G,{\mathbb{R}})$. 
For a Hilbert space $\mathcal{H}$, 
by summing over the coordinates with respect to some orthonormal basis, 
it is straightforward to see that 
$\lambda (G, \mathcal{H})=\lambda (G, \mathbb{R})=\mu_1 (G)$ 
for any graph $G$. 
On the other hand, 
as we see in Proposition \ref{degree-bounded-decay}, 
the asymptotic behavior of 
nonlinear spectral gaps of a sequence of expanders changes 
drastically if the target metric space changes. 
A sequence of (uniformly weighted) finite connected graphs $\left\{ G_n =(V_n ,E_n )\right\}_{n=1}^{\infty}$ 
is called a {\em sequence of expanders} if it satisfies the following properties: 
\begin{enumerate}
\item
$\lim_{n\to\infty}| V_n |=\infty$.
\item
There exists $d$ such that $\mathrm{deg}(v)\le d$ for all $v\in V_n$ 
and all $n$. 
\item
There exists $\lambda >0$ such that 
$
\mu_1 (G_n )\ge\lambda
$
for all $n$.  
\end{enumerate}

\begin{proposition}\label{degree-bounded-decay}
Suppose that a sequence of 
(uniformly weighted) graphs $\{G_n =(V_n ,E_n )\}_{n=1}^{\infty}$ 
satisfies the above properties $\mathrm{(1)}$ and $\mathrm{(2)}$. 
Then there exists a metric space $X$ such that 
$$
\lambda(G_n ,X)\lesssim_d \frac{1}{(\log |V_n |)^2} . 
$$
\end{proposition}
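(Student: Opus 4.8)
The plan is to use the shortest-path metric on the graphs $G_n$ themselves as the target space $X$, and to exploit the tension between the logarithmic diameter forced by bounded degree and the linear diameter growth that a Poincaré inequality with a good constant would demand. More precisely, I would take $X = \bigsqcup_n (V_n, d_{G_n})$, the disjoint union of the vertex sets equipped with their graph metrics (rescaled so that distinct components are at distance, say, $1$ from each other), and test the Poincaré inequality for $\lambda(G_n, X)$ with the isometric embedding $f = f_n : V_n \hookrightarrow X$ sending each vertex to itself.

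With this choice, the right-hand side of the Poincaré inequality is controlled: for each edge $\{x,y\}\in E_n$ we have $d_X(f(x),f(y)) = d_{G_n}(x,y) = 1$, so $\sum_{x,y}m(x,y)d_X(f(x),f(y))^2 = 2|E_n| \le d\,|V_n|$ by property (2). For the left-hand side I need a lower bound on $\sum_{x,y\in V_n} m(x)m(y)\,d_{G_n}(x,y)^2$. Here I would use two facts. First, $m(\emptyset) = \sum_x m(x) = 2|E_n| \asymp |V_n|$ and each $m(x)=\deg(x)\le d$, so up to constants depending on $d$ this sum is comparable to $\frac{1}{|V_n|}\sum_{x,y} d_{G_n}(x,y)^2$, i.e.\ to the average squared distance in $G_n$. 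Second, a bounded-degree graph on $N$ vertices has at most $1 + d + d(d-1) + \cdots + d(d-1)^{k-1}$ vertices within distance $k$ of any fixed vertex; hence a positive proportion of pairs $(x,y)$ are at distance $\gtrsim_d \log N = \log|V_n|$ from each other (a standard volume-growth counting argument). Therefore the left-hand side is $\gtrsim_d (\log|V_n|)^2$. Combining,
$$
\lambda(G_n, X) \le \frac{\sum_{x,y} m(x,y)\,d_X(f(x),f(y))^2}{\frac{1}{m(\emptyset)}\sum_{x,y}m(x)m(y)\,d_X(f(x),f(y))^2} \lesssim_d \frac{d\,|V_n|}{(\log|V_n|)^2 \cdot |V_n| / |V_n|}\Big/|V_n| \lesssim_d \frac{1}{(\log|V_n|)^2},
$$
after bookkeeping the $|V_n|$ factors carefully.

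The main point to get right — and the step I expect to be the real obstacle — is the lower bound on the average squared distance, i.e.\ showing that a definite fraction of vertex pairs are at graph-distance of order $\log|V_n|$. The ball-volume bound gives $|B(x,k)| \le d(d-1)^{k}$, so the number of vertices within distance $k = c\log_d|V_n|$ of a fixed $x$ is at most $|V_n|^{1/2}$ (say) for a suitable small $c$; hence for \emph{every} $x$, at least $|V_n| - |V_n|^{1/2}$ of the vertices $y$ satisfy $d_{G_n}(x,y) > c\log_d|V_n|$. Summing over $x$ shows all but an $o(1)$ fraction of ordered pairs are at distance $\gtrsim_d \log|V_n|$, which is exactly what is needed; this is clean, but one must be careful that the implied constants depend only on $d$ and not on $n$. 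A secondary subtlety is that $X$ must genuinely be a single metric space serving all $n$ simultaneously, which the disjoint-union construction handles, and one should note that the estimate is vacuous (hence trivially true) for the finitely many $n$ with $|V_n|$ small, so the asymptotic statement is unaffected. No expansion hypothesis is used anywhere — only (1) and (2) — consistent with the statement of the proposition.
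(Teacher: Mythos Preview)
Your proposal is correct and is essentially the paper's own proof: both take $X$ to be a single metric space containing all the $G_n$ isometrically, test the Poincar\'e inequality against the identity embedding, bound the edge-sum numerator by $d\,|V_n|$, and use the ball-volume estimate $|B(x,k)|\lesssim_d (d-1)^k$ to conclude that a positive fraction of vertex pairs lie at distance $\gtrsim_d \log|V_n|$. The only cosmetic difference is that the paper separates out the degenerate case $d=2$ (where ball growth is linear rather than exponential, so your bound $|B(x,k)|\le d(d-1)^k$ fails as written but the conclusion is in fact stronger) and handles it via path graphs.
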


Here, 
$A\lesssim B$ or $B\gtrsim A$ means that there exists a 
universal constant $C>0$ such that $A\leq CB$. 
We write $A\asymp B$ 
if both $A\lesssim B$ and $B\lesssim A$ hold. 
If we have $A\leq C_p B$ for 
a constant $C_p >0$ which depends only on some parameter $p$, we write 
$A\lesssim_p B$ or $B\gtrsim_p A$. 
We write $A\asymp_p B$ 
if both $A\lesssim_p B$ and $B\lesssim_p A$ hold. 

In particular, Proposition \ref{degree-bounded-decay} shows that for any sequence of expanders $\{ G_n\}$, 
for some metric space $X$,
we have 
\begin{equation}\label{expander-est}
\frac{\lambda(G_n ,X)}{\mu_1 (G_n )}\lesssim_d \frac{1}{(\log |V_n |)^2} .
\end{equation}
In fact, this estimate of the ratio between linear and nonlinear spectral gaps turns out to be a sharp order of magnitude 
by a simple application of Bourgain's embedding theorem: 
\begin{equation}\label{bourgain-est}
\frac{\lambda(G,X)}{\mu_1 (G)}
\gtrsim
\frac{1}{(\log |V|)^2} 
\end{equation}
(see Proposition \ref{bourgain-estimate}).

A purpose of this paper is to establish a more accurate lower estimate of the nonlinear spectral gap of a given graph, 
which is independent of target metric spaces, 
by generalizing the path-method developed by Jerrum and Sinclair \cite{JS},
Diaconis and Stroock \cite{DSt}, and Quastel \cite{Q}, and 
Diaconis and Saloff-Coste \cite{DSa}
(see also Saloff-Coste \cite{SC}). 
Although the original path method is well-known 
in the context of random walks as one giving  
only rough estimates of the linear spectral gap, 
we will see that 
our generalization gives asymptotically sharp estimates of the nonlinear spectral gaps in some examples.

A {\em path} on a graph $G=(V,E)$ is a finite sequence 
$$
(x_1 ,x_2 ), (x_2 ,x_3),\ldots ,(x_{n-1} ,x_{n} ), (x_n ,x_{n+1} )
$$
in $\vec{E}$ such that all the vertices $x_1, \ldots ,x_{n+1}$ are distinct. 
For two distinct vertices $x,y\in V$, we denote by $\Gamma (x,y)$ 
the set of all paths from $x$ to $y$. 
For a weight function $w: V\times V\to\lbrack 0,\infty )$ 
and a path 
$\gamma : (x_1 ,x_2 ), (x_2 ,x_3),\ldots ,(x_n ,x_{n+1} )$ on $G$, 
we define 
$$
| \gamma |_w
=
\sum_{i=1}^{n}\frac{1}{w(x_i ,x_{i+1} )} .
$$
\begin{theorem}\label{nonlinearSC-th}
Let $(G,m)$ be a weighted graph. 
Let $w$ be another weight function on $G$. 
We assign 
one path $\gamma (x,y)\in\Gamma (x,y)$ 
to each ordered pair $(x, y) \in V\times V$ of distinct vertices. 
We define 
$$
A(w)
=
\max_{e\in\vec{E}}\left\{
\frac{1}{m(\emptyset )}\frac{1}{m(e)}w(e)
\sum_{(x,y)\hspace{1mm}\textrm{s.t.}\hspace{1mm}\gamma (x,y)\owns e}
|\gamma (x,y)|_w m(x)m(y)\right\} . 
$$
Then we have 
$$
\lambda (G, X)\geq\frac{1}{A(w)}
$$
for any metric space $X$. 
\end{theorem}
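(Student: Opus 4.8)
The plan is to reduce the inequality to a term-by-term estimate along paths, exactly as in the classical path method, but carrying the distance function $d_X$ through the Cauchy–Schwarz step since we cannot diagonalize in a general metric space. Fix any $f\colon V\to X$. For each ordered pair $(x,y)$ of distinct vertices, write $\gamma(x,y)$ as the edge sequence $(x=v_0,v_1),(v_1,v_2),\ldots,(v_{k-1},v_k=y)$, and use the triangle inequality in $X$ together with Cauchy–Schwarz with the weights $1/w(v_{i-1},v_i)$: since $\sum_i 1/w(v_{i-1},v_i)=|\gamma(x,y)|_w$, we get
$$
d_X(f(x),f(y))^2
\le
\Bigl(\sum_{i=1}^k d_X(f(v_{i-1}),f(v_i))\Bigr)^2
\le
|\gamma(x,y)|_w\sum_{e=(u,v)\in\gamma(x,y)}\frac{1}{w(e)}\,d_X(f(u),f(v))^2 .
$$

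The second step is to multiply by $m(x)m(y)$ and sum over all ordered pairs $(x,y)$ of distinct vertices (the diagonal terms vanish, so this equals $\sum_{x,y\in V}$), then swap the order of summation so that the outer sum runs over edges $e\in\vec E$. This gives
$$
\sum_{x,y\in V}m(x)m(y)\,d_X(f(x),f(y))^2
\le
\sum_{e=(u,v)\in\vec E}\frac{1}{w(e)}\,d_X(f(u),f(v))^2
\sum_{(x,y)\,:\,\gamma(x,y)\owns e}|\gamma(x,y)|_w\,m(x)m(y).
$$
For each edge $e$, the inner sum over pairs is bounded by its definition in terms of $A(w)$: indeed $\frac{1}{w(e)}\sum_{(x,y):\gamma(x,y)\owns e}|\gamma(x,y)|_w m(x)m(y)\le m(\emptyset)\,\frac{A(w)}{m(e)}$ by the definition of $A(w)$. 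Substituting, the right-hand side is at most
$$
m(\emptyset)\,A(w)\sum_{e=(u,v)\in\vec E}\frac{m(e)}{m(e)}\cdot\frac{1}{1}\, d_X(f(u),f(v))^2,
$$
wait—more precisely each edge contributes $m(\emptyset)A(w)\,m(e)^{-1}\cdot m(e)^{?}$; the clean bookkeeping is that $\frac{1}{w(e)}\cdot(\text{inner sum})\le m(\emptyset)A(w)m(e)^{-1}$ forces $\sum_e \frac{1}{w(e)}(\text{inner sum})\,d_X(f(u),f(v))^2\le m(\emptyset)A(w)\sum_e m(e)^{-1}\cdot m(e)\,d_X(\cdot)^2$ only once we observe the sum over $\vec E$ of $m(e)d_X(f(u),f(v))^2$ is the Dirichlet form; I will arrange the constants so that dividing through by $m(\emptyset)$ yields
$$
\frac{1}{m(\emptyset)}\sum_{x,y\in V}m(x)m(y)\,d_X(f(x),f(y))^2
\le
A(w)\sum_{x,y\in V}m(x,y)\,d_X(f(x),f(y))^2 .
$$

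Since this holds for every $f\colon V\to X$, the constant $A(w)$ is an admissible choice of $C$ in the definition of $\lambda(G,X)$, hence $\lambda(G,X)\ge 1/A(w)$ for every metric space $X$. The only genuinely delicate point is the first display: one must check that Cauchy–Schwarz is applied with the correct weights so that the factor $|\gamma(x,y)|_w$ emerges exactly, and that no Hilbert-space structure is needed—only the triangle inequality for $d_X$ and the scalar Cauchy–Schwarz inequality applied to the real numbers $d_X(f(v_{i-1}),f(v_i))$. I also need to double-check the normalization of $A(w)$ against the Dirichlet form so that the $m(e)$ and $1/w(e)$ factors cancel as claimed; this is purely bookkeeping, but it is where an off-by-$m(e)$ error would hide, so I would verify it carefully in the $w=m$ case first.
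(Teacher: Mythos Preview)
Your approach is exactly the paper's: triangle inequality plus weighted Cauchy--Schwarz along each chosen path, then swap the order of summation and bound the inner sum using the definition of $A(w)$. However, there is a concrete slip in your Cauchy--Schwarz step, and it is not ``just bookkeeping'': the weight $w(e)$ must appear as a multiplier, not a divisor. Writing $\sum_i d_i = \sum_i w_i^{-1/2}\cdot (w_i^{1/2}d_i)$ and applying scalar Cauchy--Schwarz gives
\[
\Bigl(\sum_i d_i\Bigr)^2 \le \Bigl(\sum_i \tfrac{1}{w_i}\Bigr)\Bigl(\sum_i w_i\, d_i^2\Bigr) = |\gamma(x,y)|_w \sum_{e=(u,v)\in\gamma(x,y)} w(e)\,d_X(f(u),f(v))^2,
\]
which is the form the paper uses. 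With your $1/w(e)$ in place of $w(e)$, the prefactor would have to be $\sum_i w_i$, not $|\gamma(x,y)|_w$, so your first display is false as written.

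Once this is corrected, the swap of summation yields
\[
\sum_{x,y} m(x)m(y)\,d_X(f(x),f(y))^2 \le \sum_{e=(u,v)\in\vec E} w(e)\,d_X(f(u),f(v))^2 \sum_{(x,y):\,\gamma(x,y)\owns e} |\gamma(x,y)|_w\, m(x)m(y),
\]
and the definition of $A(w)$ says precisely that $w(e)\sum_{(x,y):\gamma\owns e}|\gamma|_w\,m(x)m(y)\le m(\emptyset)\,m(e)\,A(w)$ for every $e\in\vec E$. Substituting gives the right-hand side $m(\emptyset)\,A(w)\sum_{e\in\vec E} m(e)\,d_X(f(u),f(v))^2$, and dividing by $m(\emptyset)$ finishes. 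The muddle you noticed around the $m(e)$ factors is entirely caused by the inverted weight in the Cauchy--Schwarz step; with $w(e)$ on the correct side there is nothing further to check, even in the general $w\neq m$ case.
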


For the case $X=\mathbb{R}$, Theorem \ref{nonlinearSC-th} is the usual path method. 

In Section \ref{tree-sec}, we apply Theorem \ref{nonlinearSC-th} to 
the $n$-dimensional Hamming cube $H_n$ and 
an $r$-balls $T_{d,r}$ in the $d$-regular tree both equipped with uniform weights. 

For Hamming cubes, 
as a corollary of Theorem \ref{nonlinearSC-th}, 
we obtain 
\begin{equation}\label{hamming-asymptotic}
\lambda (H_n, X)\gtrsim\frac{1}{n^2}
\end{equation}
for any metric space $X$. 
Since it is known that 
$$
\mu_1 (H_n )=\frac{2}{n}, 
$$
this estimate is not asymptotically sharp for $X=\mathbb{R}$. 
However, 
as we see in Proposition \ref{hamming-prop} this is asymptotically sharp 
as an estimate for arbitrary metric spaces. 
Note that the estimate \eqref{bourgain-est} only yields 
$$
\frac{\lambda (H_n,X)}{\mu_1 (H_n)}\gtrsim \frac{1}{n^2}.
$$

As an another application of Theorem \ref{nonlinearSC-th}, 
we show that 
\begin{equation}\label{tree-asymptotic}
\lambda(T_{d,r},X)\asymp_d \frac{1}{(d-1)^r}
\end{equation}
for any metric space $X$ (Corollary \ref{tree-order}). 
Hence, 
the asymptotic behavior 
of nonlinear spectral gaps of $T_{d,r}$ as $r\to\infty$ 
does not depend upon the target metric space in contrast to 
the case of a sequence of expanders \eqref{expander-est}. 
This implies that 
$$
\frac{\lambda (T_{d,r},X)}{\mu_1 (T_{d,r})}\asymp_d 1,
$$
which is much better than the estimate \eqref{bourgain-est}. 

Finally, we briefly review 
some results concerning the estimates of 
nonlinear spectral gaps with respect to the class of metric spaces called $\mathrm{CAT}(0)$ spaces. 
In order to estimate $\lambda(G,X)$ from below, 
Izeki and Nayatani \cite{IN} 
introduced an invariant $0\leq\delta (X)\leq 1$ for a complete $\mathrm{CAT}(0)$ space $X$, 
and showed that 
$$
\frac{1}{2}
(1-\delta (X))\mu_1 (G)
\leq
\lambda(G,X) 
\leq\mu_1 (G) .
$$
Hence, if 
$\delta (X)<1$ we have 
$$
\mu_1 (G_n )\asymp \lambda (G_n,X) 
$$ 
for any sequence of graphs $\{ G_n \}_{n=1}^{\infty}$. 
This implies in particular that a sequence of expanders in a usual sense is also 
a sequence of expanders with respect to $X$. 

Many estimates of $\delta$ have been done up to now (see \cite{IN}, \cite{IKN2}, \cite{FT}, \cite{To2}, and \cite{To3}). 
According to these estimates, 
the class of $\mathrm{CAT}(0)$ spaces with $\delta <1$ includes 
Hilbert spaces, Hadamard manifolds, trees, complete $\mathrm{CAT}(0)$ cube complexes, 
and geodesically complete $\mathrm{CAT}(0)$ spaces which admit proper cocompact isometric group actions such as 
Bruhat-Tits buildings associated to semi-simple algebraic groups. 
On the other hand, the first author \cite{K2} proved 
the existence of a complete $\mathrm{CAT}(0)$ space $X$ and 
a sequence $\{G_n \}_{n=1}^{\infty}$ of expanders such that
\begin{equation}\label{kondo-expander}
\lim_{n\to\infty}\lambda(G_n,X) =0. 
\end{equation}
In particular, such a $\mathrm{CAT}(0)$ space $X$ satisfies $\delta (X)=1$. 
This result means that 
a drastic change in 
the nonlinear spectral gap 
may happen 
even within the class of $\mathrm{CAT}(0)$ spaces. 

Related estimates of the nonlinear spectral gap in comparison with the linear spectral gap are 
also found in Naor-Silberman (\cite{NS}). 
They showed that if a metric space $X$ has a finite Nagata dimension and $G$ is any graph, 
$$
\frac{\lambda(G ,X)}{\mu_1 (G)^2}\gtrsim_X 1 
$$
holds. 

The paper is organized as follows. 
In Section \ref{SC-sec}, we develop techniques to 
estimate nonlinear spectral gaps uniformly from below. 
First, we present a simple method to estimate nonlinear spectral gaps 
by using estimates of Euclidean distortions. 
Then we prove Theorem \ref{nonlinearSC-th}. 
In Section \ref{tree-sec}, we apply Theorem \ref{nonlinearSC-th} 
to Hamming cubes and trees and obtain \eqref{hamming-asymptotic} and \eqref{tree-asymptotic}.

\section{Uniform lower bounds of nonlinear spectral gaps}\label{SC-sec}

In this section, we prove Theorem \ref{nonlinearSC-th}. 
Before proving it, 
we present one simple argument to obtain a uniform lower bound of non-linear spectral 
gaps by using embeddings into a Hilbert space. 
This type of estimation is also found in Naor and Silberman \cite{NS}. 

\begin{definition}
Let $(X,d_X)$ and $(Y,d_Y)$ be metric spaces. 
For an injective mapping $f: X\to Y$, the {\em distortion} of $f$  
is defined to be the product  
$$
\sup_{x,y\in X, \hspace{1mm}x\neq y}\frac{d_Y (f(x),f(y))}{d_X (x,y)}
\cdot
\sup_{x,y\in X, \hspace{1mm}x\neq y}\frac{d_X (x,y)}{d_Y (f(x),f(y))} .
$$
If a mapping $f: X\to Y$ is not injective, the distortion of $f$ is defined to be $\infty$. 
The {\em distortion} $c_Y (X)$ is the 
infimum of the distortions of all mappings from $X$ to $Y$. 
We denote by $c_p (X)$ the distortion of $X$ into $L_p$. 
\end{definition}

\begin{proposition}\label{izeki-estimate}
Let $(G,m)$ be a finite connected weighted graph with $n\geq 2$ vertices, and 
let $(X,d_X )$ be a metric space. 
We define 
$$
c_2 (X,n)
=
\max\{c_2 (X')\hspace{1mm}|\hspace{1mm} X'\subset X, |X' |\leq n\}.
$$
Then we have 
$$
\lambda(G,X)
\geq
\frac{1}{c_2 (X,n)^2}\mu_1 (G). 
$$
\end{proposition}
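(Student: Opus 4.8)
The plan is to reduce the statement to the Hilbert space identity $\lambda(G,\mathcal{H})=\mu_1(G)$ recalled in the introduction, by post-composing an arbitrary map $f\colon V\to X$ with a near-optimal Euclidean embedding of its (necessarily finite) image. The point is that $f(V)$ has at most $n$ points, so it may be embedded into $L_2$ with distortion essentially $c_2(X,n)$, and under such an embedding the two quadratic forms appearing in the Poincar\'e inequality are distorted by controlled factors, after which the Hilbert space case does the rest.

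Concretely, I would fix $f\colon V\to X$ and set $X'=f(V)\subset X$, so that $|X'|\le n$ and hence $c_2(X')\le c_2(X,n)$. Given $\varepsilon>0$, choose an embedding $\phi\colon X'\to L_2$ of distortion at most $c_2(X,n)+\varepsilon$, and rescale it so that
$$
d_X(u,v)\le\|\phi(u)-\phi(v)\|\le (c_2(X,n)+\varepsilon)\,d_X(u,v)\qquad(u,v\in X').
$$
Writing $g=\phi\circ f\colon V\to L_2$, the lower bound gives
$$
\sum_{x,y\in V}m(x)m(y)\,d_X(f(x),f(y))^2\le\sum_{x,y\in V}m(x)m(y)\,\|g(x)-g(y)\|^2,
$$
and the upper bound gives
$$
\sum_{x,y\in V}m(x,y)\,\|g(x)-g(y)\|^2\le(c_2(X,n)+\varepsilon)^2\sum_{x,y\in V}m(x,y)\,d_X(f(x),f(y))^2.
$$
Since $L_2$ is a Hilbert space, $\lambda(G,L_2)=\mu_1(G)$, so
$$
\frac{1}{m(\emptyset)}\sum_{x,y\in V}m(x)m(y)\,\|g(x)-g(y)\|^2\le\frac{1}{\mu_1(G)}\sum_{x,y\in V}m(x,y)\,\|g(x)-g(y)\|^2.
$$
Chaining these three inequalities yields
$$
\frac{1}{m(\emptyset)}\sum_{x,y\in V}m(x)m(y)\,d_X(f(x),f(y))^2\le\frac{(c_2(X,n)+\varepsilon)^2}{\mu_1(G)}\sum_{x,y\in V}m(x,y)\,d_X(f(x),f(y))^2.
$$
As $f$ and $\varepsilon>0$ are arbitrary, the constant $C=c_2(X,n)^2/\mu_1(G)$ is admissible in the definition of $\lambda(G,X)$, hence $\lambda(G,X)\ge\mu_1(G)/c_2(X,n)^2$.

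I expect no substantial obstacle. The only points needing a little care are: the normalization of $\phi$ (absorbing the scaling constant so that the non-contracting side of the distortion bound is exactly the side applied to the $m(x)m(y)$-sum, and the expanding side the one applied to the $m(x,y)$-sum); the passage $\varepsilon\to 0$; and noting that $c_2(X,n)$ is finite (every $n$-point metric space embeds into $L_2$ with finite distortion, e.g.\ by Bourgain's theorem), so that the asserted bound is meaningful. Connectedness of $G$ enters only through $\mu_1(G)>0$, needed for the Hilbert space inequality to make sense.
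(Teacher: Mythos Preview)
Your argument is correct and is essentially the same as the paper's own proof: take an arbitrary $f\colon V\to X$, embed $f(V)$ into $\ell_2$ with distortion at most $c_2(X,n)+\varepsilon$ (normalized to be non-contracting), apply the Hilbert space identity $\lambda(G,\ell_2)=\mu_1(G)$, and let $\varepsilon\to 0$. The additional remarks you make about normalization, finiteness of $c_2(X,n)$, and the role of connectedness are accurate but not needed beyond what the paper already assumes.
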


\begin{proof}
Let $f:V\to X$ be an arbitrary mapping. 
Then, by the definition of $c_2 (X,n)$, for any $\varepsilon >0$, there exists a mapping 
$\varphi :f(V)\to\ell_{2}$ such that  
$$
d_X (f(x),f(y))\leq\|\varphi\circ f(x)-\varphi\circ f (y)\|\leq  (c_2 (X,n)+\varepsilon ) d_X (f(x),f(y)) 
$$
holds for any $x,y\in V$. 
Hence, we have 
\begin{align*}
\frac{1}{m (\emptyset )}\sum_{x,y\in V}m(x)& m(y) d_X ( f(x),f(y))^2 \\
&\leq
\frac{1}{m (\emptyset )}\sum_{x,y\in V}m(x) m(y) \| \varphi\circ f(x)-\varphi\circ f(y)\|^2 \\
&\leq
\frac{1}{\mu_1 (G)}
\sum_{x,y\in V}m(x,y)\| \varphi\circ f(x)-\varphi\circ f(y)\|^2 \\
&\leq
\frac{(c_2 (X,n)+\varepsilon )^2}{\mu_1 (G)}
\sum_{x,y\in V}m(x,y)d_X ( f(x), f(y))^2 .
\end{align*}
Since $\varepsilon >0$ is arbitrary, this proves the proposition. 
\end{proof}

Combining this proposition with the following 
well-known Bourgain's embedding theorem, 
we obtain a uniform estimate on nonlinear spectral gaps. 

\begin{theorem}[Bourgain \cite{B}]
For every $n$-point metric space $(X,d_X )$, we have 
$$
c_2 (X)\lesssim\log n.
$$ 
\end{theorem}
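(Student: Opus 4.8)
The plan is to reproduce Bourgain's original random Fréchet-type embedding into Euclidean space. Put $L=\lceil\log_2 n\rceil$. For each scale $t\in\{1,\dots,L\}$ I would choose $m=C\log n$ independent random subsets $A_{t,1},\dots,A_{t,m}$ of $X$, where every point of $X$ is placed into $A_{t,j}$ independently with probability $2^{-t}$, and define $\varphi\colon X\to\mathbb{R}^{Lm}$ by $\varphi(x)_{t,j}=d_X(x,A_{t,j})$ with $d_X(x,A)=\min_{a\in A}d_X(x,a)$; since $L_2$ contains $\mathbb{R}^{Lm}$ isometrically, bounding the distortion of some such $\varphi$ by $O(\log n)$ suffices. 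Each coordinate $x\mapsto d_X(x,A_{t,j})$ is $1$-Lipschitz, so deterministically $\|\varphi(x)-\varphi(y)\|_2\le\sqrt{Lm}\,d_X(x,y)$ for all $x,y$; the whole difficulty is the matching lower bound $\|\varphi(x)-\varphi(y)\|_2\gtrsim\sqrt{m/L}\,d_X(x,y)$, which together force the distortion to be $\lesssim L$.

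For the lower bound I would fix $x\ne y$, write $\Delta=d_X(x,y)$, and introduce radii $\rho_0=0$ and $\rho_t=\inf\{\rho\ge0 : |B(x,\rho)|\ge 2^t\text{ and }|B(y,\rho)|\ge 2^t\}$ (closed balls), truncated at $\Delta/2$: let $T$ be the largest index with $\rho_T<\Delta/2$ and set $\hat\rho_{T+1}=\Delta/2$ and $\hat\rho_t=\rho_t$ for $t\le T$. Since a closed ball of radius $<\Delta/2$ about $x$ cannot contain $y$, we get $\rho_L\ge\Delta/2$, hence $T+1\le L$, and the increments telescope, $\sum_{t=1}^{T+1}(\hat\rho_t-\rho_{t-1})=\Delta/2$. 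The key lemma is that for each $t\le T+1$ and each $j$, with probability at least an absolute constant $c_0>0$ one has $|d_X(x,A_{t,j})-d_X(y,A_{t,j})|\ge\hat\rho_t-\rho_{t-1}$. To prove it, note $|B(x,\rho_{t-1})|\ge 2^{t-1}$, while by definition of $\rho_t$ at least one of $x,y$ — say $y$ — has strictly fewer than $2^t$ points at distance $<\rho_t$ from it; then the events ``$A_{t,j}$ contains a point of $B(x,\rho_{t-1})$'' and ``$A_{t,j}$ contains no point at distance less than $\hat\rho_t$ from $y$'' involve disjoint sets of points (triangle inequality, using $\rho_{t-1}<\Delta/2$ and $\hat\rho_t\le\Delta/2$), hence are independent, and each has probability at least an absolute constant for the inclusion probability $2^{-t}$ (namely $(1-2^{-t})^{2^t}\ge 1/4$ and $1-(1-2^{-t})^{2^{t-1}}\ge 1/3$); on their intersection $d_X(x,A_{t,j})\le\rho_{t-1}$ while $d_X(y,A_{t,j})\ge\hat\rho_t$.

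It then remains to aggregate. For each pair $(x,y)$ and scale $t$ a Chernoff bound shows that with probability $1-e^{-c_1 m}$ at least $c_0 m/2$ of the indices $j$ are ``good'' in the sense of the lemma; choosing $C$ (hence $m$) large and union-bounding over the at most $n^2 L$ triples $(x,y,t)$ makes all these events hold simultaneously with positive probability, so a genuinely valid $\varphi$ exists. For such a $\varphi$, restricting the sum defining $\|\varphi(x)-\varphi(y)\|_2^2$ to the good indices gives $\|\varphi(x)-\varphi(y)\|_2^2\ge\frac{c_0 m}{2}\sum_{t=1}^{T+1}(\hat\rho_t-\rho_{t-1})^2$, and Cauchy--Schwarz together with the telescoping identity yields $\sum_{t=1}^{T+1}(\hat\rho_t-\rho_{t-1})^2\ge\frac{(\Delta/2)^2}{T+1}\ge\frac{\Delta^2}{4L}$; combined with the Lipschitz upper bound this bounds the distortion of $\varphi$ by $\lesssim\sqrt{Lm}/\sqrt{m/L}\asymp L\asymp\log n$, so $c_2(X)\lesssim\log n$. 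The main obstacle is the lower-bound lemma, and within it the bookkeeping: setting up the radii $\rho_t$ and the truncation correctly, handling at each scale the dichotomy of which of the two balls is ``small'', and matching the constants in the Chernoff/union-bound step so that the probability of a successful random embedding is genuinely positive. The Lipschitz estimate and the Cauchy--Schwarz aggregation are routine.
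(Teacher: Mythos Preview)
The paper does not prove this theorem at all; it simply quotes Bourgain's result from \cite{B} and uses it as a black box to deduce Proposition~\ref{bourgain-estimate}. Your proposal, by contrast, is a faithful outline of Bourgain's original random Fr\'echet embedding argument: the scales $t$ with inclusion probability $2^{-t}$, the radii $\rho_t$ chosen so that both balls $B(x,\rho_t)$ and $B(y,\rho_t)$ have size at least $2^t$, the disjointness of $B(x,\rho_{t-1})$ and the open ball of radius $\hat\rho_t$ about $y$ via the truncation at $\Delta/2$, and the Chernoff/union-bound derandomization are all correct and standard. The constants you quote ($(1-2^{-t})^{2^t}\ge 1/4$ and $1-(1-2^{-t})^{2^{t-1}}>1-e^{-1/2}>1/3$) check out, and you correctly flag that the main delicacy is the case analysis of which of $x,y$ has the ``small'' ball at each scale and the handling of the truncated scale $t=T+1$. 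So your argument is sound and strictly more detailed than what the paper offers; it is essentially Bourgain's own proof, which is exactly what the citation points to.
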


\begin{proposition}\label{bourgain-estimate}
Let $(G,m)$ be a connected weighted graph with $n$ vertices, and 
let $(X,d_X )$ be a metric space. 
Then we have 
$$
\frac{\lambda(G,X)}{\mu_1 (G)}
\gtrsim
\frac{1}{(\log n)^2}. 
$$
\end{proposition}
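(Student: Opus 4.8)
The plan is to read this off from Proposition~\ref{izeki-estimate} together with Bourgain's embedding theorem; essentially no new idea is needed beyond interpolating Bourgain's bound uniformly over all small subsets of $X$. By Proposition~\ref{izeki-estimate} we have
$$
\lambda(G,X)\geq\frac{1}{c_2(X,n)^2}\,\mu_1(G),
\qquad
c_2(X,n)=\max\bigl\{c_2(X')\ \big|\ X'\subset X,\ |X'|\leq n\bigr\},
$$
so it suffices to show $c_2(X,n)\lesssim\log n$ with a universal implicit constant.

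First I would fix an arbitrary finite subset $X'\subset X$ with $|X'|\leq n$. If $|X'|\geq 2$, Bourgain's theorem gives a universal $C>0$ with $c_2(X')\leq C\log|X'|\leq C\log n$, using the monotonicity of $\log$ and $|X'|\leq n$. If $|X'|\leq 1$ then $X'$ embeds isometrically into $\mathbb{R}\subset\ell_2$, so $c_2(X')=1$, and since $n\geq 2$ we have $1\leq(\log 2)^{-1}\log n$. Taking the maximum over all such $X'$ yields $c_2(X,n)\leq C'\log n$ for a universal constant $C'$ (which just has to absorb the factor $(\log 2)^{-1}$ coming from the degenerate small cases). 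Substituting into the displayed inequality above gives
$$
\frac{\lambda(G,X)}{\mu_1(G)}\geq\frac{1}{c_2(X,n)^2}\geq\frac{1}{(C')^2(\log n)^2}\gtrsim\frac{1}{(\log n)^2},
$$
which is the claim.

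There is no serious obstacle here: the whole content is already contained in Proposition~\ref{izeki-estimate} and Bourgain's theorem, and the only point requiring a moment's care is that Bourgain's bound must be invoked uniformly over \emph{all} subsets of $X$ of cardinality at most $n$ (not just for a single $n$-point space), which is handled by monotonicity of $\log$, together with checking that the universal constant can absorb the trivial cases $|X'|\leq 1$ and $n=2$.
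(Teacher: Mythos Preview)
Your proof is correct and follows exactly the approach the paper indicates: the paper does not spell out a separate argument for Proposition~\ref{bourgain-estimate} but simply states that it follows by combining Proposition~\ref{izeki-estimate} with Bourgain's embedding theorem, which is precisely what you do. Your additional care with the trivial cases $|X'|\leq 1$ and $n=2$ only makes explicit a detail the paper leaves to the reader.
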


Though we do not see the graph structure in the proof of Proposition \ref{bourgain-estimate}, 
this is asymptotically sharp 
as is shown by a sequence $\{ G_n \}_{n=1}^{\infty}$ of expanders, 
for which we have 
$\mu_1 (G_n ) \asymp 1$ and 
$\lambda(G_n ,X)\lesssim_d (\log |G_n| )^{-2}$ 
for some metric space $X$ by Proposition \ref{degree-bounded-decay}. 
Here we give 
the proof of Proposition~1.2 in Introduction. 
\begin{proof}[Proof of Proposition \ref{degree-bounded-decay}]
We assume that $d\geq 3$ since 
when $d = 2$, a graph $G_n$ is a path graph, and in this case 
the proposition follows from Remark \ref{path-graph-remark}. 
Let $(X,d_X )$ be a metric space containing all $G_n$ isometrically, and 
let $f_n: V_n \to X$ be an isometric embedding for each $n$. 
Since any $r$-ball in $G_n$ contains at most 
$\sum_{i=0}^{i=r}(d-1)^i \asymp_d (d-1)^r$ vertices, 
at least $|V_n | /2$ vertices $y\in V_n$ satisfy 
$d_X (x,y)\gtrsim_d \log |V_n |$ for any $x\in V_n$. 
Thus, at least half of the pairs $(x,y)\in V_n \times V_n$ satisfy 
$$
d_X (f_n (x), f_n (y) )\gtrsim_d \log |V_n |.
$$
Hence, we have 
\begin{align*}
\frac{\sum_{x,y\in V_n}m(x,y) d_X (f_n (x),f_n (y))^2}{\frac{1}{m (\emptyset )}
\sum_{x,y\in V_n}m(x) m(y) d_X (f_n (x),f_n (y))^2}
&\leq
\frac{d^2|V_n |^2}{\sum_{x,y\in V_n}d_X (f_n (x),f_n (y))^2} \\
&\lesssim_d
\frac{1}{(\log |V_n |)^2}, 
\end{align*}
which proves the proposition. 
\end{proof}

However, as we will see later, for a specific sequence of graphs, 
we can obtain a more accurate estimate 
by generalizing the path method. 

\begin{proof}[Proof of Theorem \ref{nonlinearSC-th}]
Let $f:V\to X$ be a map. 
Then the triangle inequality and the Cauchy-Schwarz inequality yield the following.  
\begin{align*}
d_X \left(f(x),f(y)\right)^2
&\leq
\left\{ \sum_{(u,v)\in\gamma (x,y)}d_X (f(u),f(v)) \right\}^2 \\
&\leq
\left( \sum_{(u,v)\in\gamma (x,y)}w(u,v)^{-1}\right)
\left( \sum_{(u,v)\in\gamma (x,y)} d_X (f(u),f(v))^2 w(u,v) \right) \\
&=
|\gamma (x,y)|_w
\sum_{(u,v)\in\gamma (x,y)}d_X (f(u),f(v))^2 w(u,v). 
\end{align*}
Multiplying $\frac{m(x)m(y)}{m(\emptyset )}$ both sides of the above inequality and 
summing over all $(x,y)\in V\times V$, we obtain the following. 

\begin{align*}
\frac{1}{m(\emptyset )}&
\sum_{(x,y)\in V\times V}
m(x) m(y)d\left(f(x),f(y)\right)^2 \\
&\leq
\sum_{(x,y)\in V\times V}
\sum_{(u,v)\in\gamma (x,y)}
\frac{1}{m(\emptyset )} |\gamma (x,y)|_w
m(x)m(y)w(u,v)d(f(u),f(v))^2 \\
&=
\sum_{(u,v)\in\vec{E}}
\sum_{(x,y)\hspace{0.2mm}\textrm{s.t.}\hspace{0.2mm}\gamma (x,y)\owns (u,v)}
\frac{1}{m(\emptyset )}
|\gamma (x,y)|_w
m(x)m(y)w(u,v)d(f(u),f(v))^2 \\
&=
\sum_{(u,v)\in\vec{E}}\Bigg\lbrack m(u,v) d(f(u),f(v))^2 \\
&\hspace{2cm}
\times\bigg\{\frac{1}{m(\emptyset )}\frac{1}{m(u,v)}w(u,v)
\sum_{(x,y)\hspace{0.2mm}\textrm{s.t.}\hspace{0.2mm}\gamma (x,y)\owns (u,v)}|\gamma (x,y)|_w
m(x)m(y)\bigg\}\Bigg\rbrack
\end{align*}
Thus, 
\begin{align*}
\frac{1}{m(\emptyset )}&
\sum_{(x,y)\in V\times V}
m(x) m(y)d\left(f(x),f(y)\right)^2 \\
&\leq
\max_{(u,v)\in\vec{E}}\bigg\{\frac{1}{m(\emptyset )}\frac{1}{m(u,v)}w(u,v)
\sum_{(x,y)\hspace{0.2mm}\textrm{s.t.}\hspace{0.2mm}\gamma (x,y)\owns (u,v)}|\gamma (x,y)|_w
m(x)m(y)\bigg\} \\
&\hspace{2cm}
\times\sum_{(u,v)\in\vec{E}}m(u,v) d(f(u),f(v))^2 \\
&=
A(w)\sum_{(u,v)\in\vec{E}}m(u,v)d(f(u),f(v))^2 ,
\end{align*}
which proves the theorem. 
\end{proof}

\section{Nonlinear spectral gaps of Hamming cubes and trees}\label{tree-sec}

In this section, we applied Theorem \ref{nonlinearSC-th} to Hamming cubes and trees. 
Let $H_n$ be the $n$-dimensional Hamming cube equipped with the uniform weight. 
In \cite{SC}, it was shown that 
\begin{equation*}\label{hamming-linear}
\mu_1 (H_n )\gtrsim\frac{1}{n^2} 
\end{equation*}
by using the path method. 
However, it is not asymptotically sharp 
since it is known that 
\begin{equation*}\label{hamming-linear-optimal}
\mu_1 (H_n )=\frac{2}{n}.
\end{equation*}
On the other hand, Theorem \ref{nonlinearSC-th} guarantees that 
the estimation in \cite{SC} also works for nonlinear spectral gaps 
with respect to any target metric spaces. 
Thus we actually have 
\begin{equation}\label{hamming-nonlinear}
\lambda (H_n ,X)\gtrsim\frac{1}{n^2} 
\end{equation}
for an arbitrary metric space $X$. 
This is the right order of magnitude for general metric spaces 
because if 
we take the identity mappings $\iota_n: H_n\to H_n$, 
we see that 
\begin{equation}\label{hamming-eq}
\lambda(H_n ,H_n )
\leq
\frac{\sum_{x,y\in V}m(x,y)d(\iota_n (x),\iota_n (y))^2}{
\frac{1}{m (\emptyset )}\sum_{x,y\in V}m(x) m(y) d(\iota_n (x),\iota_n (y))^2}
=
\frac{4}{n(n+1)}. 
\end{equation}
\begin{proposition}\label{hamming-prop}
Let $H_n$ be the $n$-dimensional Hamming cube equipped with the uniform weight. 
Let $X$ be an arbitrary metric space. 
Then, we have 
$$
\lambda (H_n , X)\gtrsim\frac{1}{n^2},  
$$
and this estimate is asymptotically sharp. 
\end{proposition}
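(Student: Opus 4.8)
The plan is to derive the lower bound $\lambda(H_n,X)\gtrsim 1/n^{2}$ from Theorem~\ref{nonlinearSC-th} by running the classical ``canonical paths'' argument on the cube with the uniform weight, and then to read off the asymptotic sharpness directly from the elementary computation \eqref{hamming-eq} already recorded above.

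\emph{Lower bound.} I would identify the vertex set of $H_n$ with $\{0,1\}^{n}$ and take $w=m$, the uniform weight, so that $w(e)=m(e)=1$ for every $e\in\vec{E}$, $m(x)=n$ for every vertex $x$, and $m(\emptyset)=n\,2^{n}$. To each ordered pair $(x,y)$ of distinct vertices assign the path $\gamma(x,y)$ that flips, one coordinate at a time and in increasing order of index, exactly the coordinates on which $x$ and $y$ differ; the Hamming distance to $x$ strictly increases along this sequence, so it is indeed a path in the sense of the paper, and $|\gamma(x,y)|_{w}=d_H(x,y)\le n$.

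\emph{Congestion estimate.} Fix a directed edge $e=(u,v)$ and let $i$ be the coordinate on which $u$ and $v$ differ. The key combinatorial point is that $\gamma(x,y)\owns e$ holds precisely when $x$ agrees with $u$ on the coordinates $i,i+1,\dots,n$ and $y$ agrees with $v$ on the coordinates $1,2,\dots,i$: when $\gamma(x,y)$ traverses $e$, the current vertex has already been made to match $y$ on the first $i-1$ coordinates and still matches $x$ from the $i$-th coordinate on. Hence $x$ is free on the first $i-1$ coordinates and $y$ is free on the last $n-i$ coordinates, giving exactly $2^{\,i-1}\cdot 2^{\,n-i}=2^{\,n-1}$ such pairs, independently of $e$. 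Since for each of them $|\gamma(x,y)|_{w}\,m(x)\,m(y)\le n\cdot n\cdot n=n^{3}$, the definition of $A(w)$ yields
\[
A(w)\;\le\;\frac{1}{n\,2^{n}}\cdot 1\cdot 1\cdot 2^{\,n-1}n^{3}\;=\;\frac{n^{2}}{2},
\]
and Theorem~\ref{nonlinearSC-th} gives $\lambda(H_n,X)\ge 1/A(w)\ge 2/n^{2}\gtrsim 1/n^{2}$ for every metric space $X$. (A sharper constant could be obtained by computing $\sum d_H(x,y)$ over the routed pairs exactly, but this does not affect the order.)

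\emph{Sharpness.} For the matching upper bound one only needs that $H_n$ is itself a metric space: the identity map $\iota_n\colon H_n\to H_n$ shows, via \eqref{hamming-eq}, that $\lambda(H_n,H_n)\le 4/(n(n+1))\asymp 1/n^{2}$, so no universal lower bound of better order than $1/n^{2}$ can hold over all targets, and the estimate is asymptotically sharp. The only genuinely non-routine step is the congestion count; once the pairs routed through a fixed edge are parametrized as above, everything else is bookkeeping with the uniform weight, and this part of the argument is exactly the path-method computation of Saloff-Coste~\cite{SC}, now valid for arbitrary metric targets because of Theorem~\ref{nonlinearSC-th}.
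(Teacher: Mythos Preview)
Your proof is correct and follows essentially the same approach as the paper: the paper deduces the lower bound by invoking Theorem~\ref{nonlinearSC-th} together with the canonical-paths computation in Saloff-Coste~\cite{SC}, and obtains sharpness from the identity map via~\eqref{hamming-eq}. You simply make the \cite{SC} congestion count explicit (correctly: $2^{n-1}$ routed pairs through each directed edge, giving $A(w)\le n^{2}/2$), which is a faithful unpacking of the cited reference rather than a different argument.
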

In fact, we will show in a forthcoming paper \cite{KT3} that the right-hand side of \eqref{hamming-eq} gives the optimal value of the infimum of 
the nonlinear spectral gaps:
$$
\inf\{\lambda(H_n ,X)\hspace{1mm}:\hspace{1mm}X \textrm{ is a metric space}\}
=
\lambda(H_n ,H_n )
=
\frac{4}{n(n+1)}. 
$$ 

Now, we proceed to another application of Theorem \ref{nonlinearSC-th}. 
For an integer $d\geq 2$, 
let $T_d$ be the $d$-regular tree. 
For an integer $r\geq 0$, let $T_{d,r}$ be an $r$-ball in $T_d$. 
We consider the uniform weight function $m$ on $T_{d,r}$. 
Combination of Bourgain's estimate (\cite{B2}) of the Euclidean distortion 
$$
c_2 (T)\asymp\sqrt{\log\log n}
$$
for an $n$-point tree $T$ 
with 
Proposition \ref{izeki-estimate} 
only yields that 
$$
\frac{\lambda (T,X)}{\mu_1 (T)}
\gtrsim
\frac{1}{\log\log n}
$$
for an arbitrary metric space $X$, 
which implies that for $T_{d,r}$, we have 
\begin{equation}\label{log-ineq}
\frac{\lambda(T_{d,r},X)}{\mu_1 (T_{d,r})}
\gtrsim_d 
\frac{1}{\log r}.
\end{equation}
Unfortunately, the right hand side of \eqref{log-ineq} 
tends to $0$ as $r$ goes to infinity. 
The following proposition gives a more accurate estimate. 

\begin{proposition}\label{from-below-prop}
Let $(X,d)$ be any metric space. 
Then we have 
$$
\lambda(T_{d,r}, X)
\geq
\frac{d-2}{d^2 (d-1)}
\frac{\left\{ (d-1)^r -1\right\}}{(d-1)^r}
\times
(d-1)^{-r}
$$
for any $d\geq 3$ and $r\geq 1$. 
\end{proposition}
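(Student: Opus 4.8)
The plan is to apply Theorem~\ref{nonlinearSC-th} to the weighted graph $(T_{d,r},m)$, with $m$ the uniform weight, choosing a suitable auxiliary weight $w$. Note first that the path assignment is forced: $T_{d,r}$ is a tree, so for each ordered pair $(x,y)$ of distinct vertices $\gamma(x,y)$ can only be the unique geodesic joining them. Hence everything comes down to choosing $w$ so that $A(w)$ is as small as possible --- and, crucially, balanced over all edges.

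Root $T_{d,r}$ at its center $o$ and, for an edge $e$, let $\mathrm{dep}(e)\in\{1,\dots,r\}$ denote the depth of the endpoint of $e$ farther from $o$. The naive choice $w=m$ will not do: a geodesic between leaves in two different branches at $o$ has roughly $2r$ edges, so $|\gamma(x,y)|_w$ is of order $r$ for almost all pairs, which inflates $A(w)$ by a factor of order $r$. First I would therefore take $w$ to decay geometrically toward $o$, namely $w(e)=(d-1)^{\mathrm{dep}(e)}$. Since a geodesic meets each depth level at most twice,
$$
|\gamma(x,y)|_w\;\le\;2\sum_{k\ge 1}(d-1)^{-k}\;=\;\frac{2}{d-2}
$$
for every pair $(x,y)$, so this length factor is now $O_d(1)$, uniformly in $r$.

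Next I would compute the ``flow'' through an edge. Removing an edge $e$ of depth $k$ disconnects $T_{d,r}$ into the subtree $S_e$ hanging below $e$ and its complement, and $\gamma(x,y)$ uses $e$ exactly when $x$ and $y$ lie on opposite sides. Since $S_e$ is connected with a single edge leaving it, $\sum_{x\in S_e}m(x)=2|S_e|-1$ with $|S_e|=\big((d-1)^{r-k+1}-1\big)/(d-2)$, and
$$
\sum_{(x,y)\,:\,\gamma(x,y)\owns e}m(x)m(y)\;=\;2\Big(\sum_{x\in S_e}m(x)\Big)\Big(m(\emptyset)-\sum_{x\in S_e}m(x)\Big),
$$
where $m(\emptyset)=2(|V(T_{d,r})|-1)=2d\big((d-1)^r-1\big)/(d-2)$. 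Combining this with the bound on $|\gamma(x,y)|_w$ and $m(e)=1$, the term inside the maximum defining $A(w)$ for an edge of depth $k$ is at most $\frac{(d-1)^k}{m(\emptyset)}\cdot\frac{2}{d-2}\cdot 2N_k(m(\emptyset)-N_k)$ with $N_k=2|S_e|-1$. Substituting the closed forms, the factor $(d-1)^k$ coming from $w$ cancels the $(d-1)^{-k}$ hidden in $N_k$, so this expression is of the same order for every $k$; a short estimate then bounds its maximum over $k=1,\dots,r$ by $c_d\,(d-1)^r$ for an explicit $c_d>0$. Feeding this into Theorem~\ref{nonlinearSC-th} yields $\lambda(T_{d,r},X)\ge 1/A(w)\gtrsim_d(d-1)^{-r}$ for every metric space $X$. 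Keeping the exact value of $m(\emptyset)$ throughout (this is where the factor $\{(d-1)^r-1\}/(d-1)^r$ in the statement comes from) and identifying the dominant edge precisely gives the bound in the asserted form.

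The one genuine idea here is the choice $w(e)=(d-1)^{\mathrm{dep}(e)}$: decay toward the center is exactly what makes $A(w)$ uniformly spread over the edges, so that no edge of $T_{d,r}$ carries more than a $c_d(d-1)^r$ share. Everything else --- the level structure of $T_{d,r}$, the across-an-edge identity for $\sum m(x)m(y)$, and the maximization over $k$ --- is bookkeeping. The only point that takes some care, rather than being hard, is retaining enough of the lower-order terms in $N_k$ and $m(\emptyset)$ to land on precisely the stated constant instead of a nearby one of the same order $(d-1)^{-r}$; there is no real obstacle, and any careful version of the estimate gives the uniform-in-$X$ lower bound $\lambda(T_{d,r},X)\gtrsim_d(d-1)^{-r}$ that underlies Corollary~\ref{tree-order}.
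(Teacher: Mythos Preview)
Your approach is the paper's own: the same auxiliary weight $w(e)=(d-1)^{\mathrm{dep}(e)}$, the same bound $|\gamma|_w\le 2/(d-2)$, and the same flow-across-an-edge count fed into Theorem~\ref{nonlinearSC-th}. The only cosmetic difference is that the paper bounds $m(x)m(y)\le d^2$ and counts pairs by $|U|\cdot|W|$ rather than using your exact $\sum_{x\in S_e}m(x)=2|S_e|-1$; that crudeness is in fact what produces the specific constant $\tfrac{d-2}{d^2(d-1)}$, so your sharper bookkeeping would naturally land on a slightly \emph{better} constant rather than the identical one---but either way the proposition follows.
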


\begin{proof}
Since $T_{d,r}$ is a tree, 
each $\Gamma (x,y)$ contains only one path $\gamma (x,y)$. 
We define a weight function $w: V\times V\to (0,\infty)$ on $T_{d,r}$ 
by setting 
$$
w(x,y)=(d-1)^{i+1}  ,\quad (x,y)\in\vec{E}, 
$$
where $i$ is the graph distance from the center vertex $o$ to 
$\{x ,y\}$. 

Let $e=(u,v)\in\vec{E}$ be an arbitrary ordered edge. 
Then $e$ separates $V$ into two connected components, 
$U$ containing $u$ and 
$W$ containing $v$. 
According to Theorem \ref{nonlinearSC-th}, we need to estimate 
$$
A(w,e)
=
\frac{1}{m(\emptyset )}\frac{1}{m(e)}w(e)
\sum_{(x,y)\hspace{1mm}\textrm{s.t.}\hspace{1mm}\gamma (x,y)\owns e}
|\gamma (x,y)|_w m(x)m(y)
$$
from above. 
We can assume that $o\in U$. 
Let $k$ be the graph distance between $o$ and $v$. 
Then we have 
\begin{align*}
|W|
&=
\sum_{l=0}^{r-k}(d-1)^{l}
=
\frac{(d-1)^{r-k+1}-1}{d-2}
\leq
(d-1)^{r-k+1},
\\
|U|
&\leq
|V|=
1+\sum_{l=1}^r d(d-1)^{l-1}
\leq
\frac{d}{d-2}(d-1)^r , \\
m(\emptyset )
&=
|\vec{E}|
=2\sum_{l=1}^r d(d-1)^{l-1}
=\frac{2d}{d-2}\left\{ (d-1)^r -1\right\} ,\\
m(v)&\leq d, \\
|\gamma (x,y)|_w 
&\leq
2
\sum_{i=1}^r \frac{1}{(d-1)^i}
\leq
\frac{2}{d-2}
\end{align*}
for every $v,x,y\in V$. 
Thus, 
\begin{align*}
A(w,e) 
&\leq
\frac{d-2}{2d\left\{ (d-1)^r -1\right\}}
\cdot\frac{1}{1}\cdot (d-1)^k \cdot
\sum_{(x,y)\hspace{1mm}\textrm{s.t.}\hspace{1mm}\gamma (x,y)\owns e}
\frac{2}{d-2}
d^2
\\
&\leq
\frac{d-2}{d\left\{ (d-1)^r -1\right\}}
\cdot (d-1)^k
\cdot
(d-1)^{r-k+1} 
\frac{d}{d-2}(d-1)^r
\frac{1}{d-2}
d^2
\\
&=
\frac{d^2 (d-1)}{d-2}
\frac{(d-1)^r}{\left\{ (d-1)^r -1\right\}}
\cdot
(d-1)^r ,
\end{align*}
which proves the proposition. 
\end{proof}

\begin{proposition}\label{from-above-prop}
Let $(X,d)$ be any metric space. 
Then we have 
\begin{multline*}
\lambda(T_{d,r},X) \\
\leq
2\bigg(
\frac{d\left\{ (d-1)^{r-1}-1\right\}}{d-2}
+(d-1)^{r-1} \hspace{5cm}\\
+\frac{\left\{ (d-1)^{r-1}-1\right\} (d-1)^r}{(d-1)^{r}-1}
+\frac{(d-2)\left\{(d-1)^{2r-1}\right\}}{d\left\{ (d-1)^{r}-1\right\}}\bigg)^{-1}
\end{multline*}
for any $d\geq 3$ and $r\geq 1$.  
\end{proposition}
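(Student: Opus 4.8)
The plan is to produce an explicit test map $f\colon V(T_{d,r})\to X$ and plug it into the Poincaré inequality defining $\lambda(T_{d,r},X)$, exactly as was done for the Hamming cube in \eqref{hamming-eq}. Since $\lambda(T_{d,r},X)$ is defined as the reciprocal of the best constant $C$, any single $f$ yields the upper bound
$$
\lambda(T_{d,r},X)\le
\frac{\sum_{x,y\in V}m(x,y)\,d(f(x),f(y))^2}{\frac{1}{m(\emptyset)}\sum_{x,y\in V}m(x)m(y)\,d(f(x),f(y))^2}.
$$
The natural candidate, and surely the one used here, is to take $X=\mathbb{R}$ (or a line segment inside $X$; note $T_{d,r}$ does embed isometrically, but a more clever radial-type map is what gives the stated four-term expression) and to let $f$ be a function that depends only on the graph distance $|x|$ from the center vertex $o$, increasing along each branch. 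A radial map is compatible with the high symmetry of $T_{d,r}$, so both the numerator and the denominator become sums over distance-pairs weighted by the number of vertices at each distance, namely $1$ at distance $0$ and $d(d-1)^{\ell-1}$ at distance $\ell$ for $1\le\ell\le r$.

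First I would fix the values $f$ takes on each sphere — call them $a_0,a_1,\dots,a_r$ — and compute the numerator: only edges contribute, an edge at graph-distance $i$ from $o$ joins a vertex on sphere $i$ to one on sphere $i+1$, and there are $d(d-1)^{i}$ such oriented edges going outward (plus their reverses), so the numerator is $2\sum_{i=0}^{r-1} d(d-1)^{i}(a_{i+1}-a_i)^2$ up to the convention for $i=0$. Second I would compute the denominator: for two vertices $x,y$ with $|x|=p$, $|y|=q$ whose nearest common ancestor sits at distance $s$ from $o$, one has $d(f(x),f(y))=|a_p-a_q|$ if they lie on the same branch from the ancestor's viewpoint, and in general the value is governed by $p,q,s$; summing with the multiplicities of such configurations produces a closed form. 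Third, I would choose the $a_\ell$ to make the ratio come out to the displayed bound — the four summands in the parenthesized expression strongly suggest a specific telescoping or geometric choice such as $a_\ell-a_{\ell-1}=(d-1)^{-\beta\ell}$ for a suitable exponent, mirroring the weight $w(x,y)=(d-1)^{i+1}$ used in the lower-bound Proposition~\ref{from-below-prop}; one expects the optimal test function for the upper bound to be, up to constants, the "potential" associated with that optimal weight.

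The main obstacle will be the denominator bookkeeping: correctly enumerating, for each pair $(p,q,s)$ with $0\le s\le\min(p,q)$ and $s<\min(p,q)$ forcing distinct child-subtrees, how many ordered pairs $(x,y)\in V\times V$ realize it, and then collapsing the resulting double or triple sum of $(d-1)^{\ell}$-type terms into the four explicit pieces shown. I would organize this by splitting $\sum_{x,y} m(x)m(y)d(f(x),f(y))^2$ according to whether $x$ is an ancestor of $y$, $y$ is an ancestor of $x$, or neither; the "neither" case is where the common-ancestor sum lives and is the delicate one. Once each case is summed as a geometric series in $(d-1)$, matching the stated expression is a finite algebraic verification, and dividing numerator by denominator (both of which, with the right $a_\ell$, are comparable to $(d-1)^{r}$ and $(d-1)^{2r}$ respectively) yields the factor-of-$2$-normalized reciprocal in the claim. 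No deep idea is needed beyond exploiting radial symmetry; the content is entirely in the choice of $f$ and the careful summation, so I would be careful to keep the $r=1$ boundary case consistent with the formula (where several of the four terms degenerate) as a sanity check.
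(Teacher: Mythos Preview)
Your approach has a genuine gap: a radial test function cannot produce the stated bound, nor even the correct order of magnitude $(d-1)^{-r}$. If $f$ depends only on $|x|$, then $d(f(x),f(y))$ depends only on $|x|$ and $|y|$ (your remark that it is ``governed by $p,q,s$'' is already a sign of confusion here), so the Rayleigh quotient of $f$ equals the Rayleigh quotient of the sequence $(a_0,\ldots,a_r)$ on the \emph{radial} weighted path, with vertex weights $\sum_{|x|=i}m(x)$ and edge weights $d(d-1)^{i}$. Both grow geometrically like $(d-1)^{i}$, and the spectral gap of such an exponentially weighted path is bounded below by a constant depending only on $d$. Concretely, almost all of the $m$--mass sits on the leaves $|x|=r$, so for any choice of $a_\ell$ the variance of $a_{|x|}$ is $O_d(1)$ times $\sum_i (d-1)^{i}(a_{i+1}-a_i)^2$ divided by $m(\emptyset)$, which forces the ratio to be $\gtrsim_d 1$ rather than $\asymp_d (d-1)^{-r}$. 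No tuning of the increments $a_{\ell}-a_{\ell-1}$ can repair this.

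What the paper does instead is exploit the Cheeger bottleneck at the center: pick an edge $e=\{o,v\}$ incident to $o$, let $U\ni o$ and $W=V\setminus U$ be the two components of $T_{d,r}\setminus e$, choose any two distinct points $p,q\in X$, and set $f\equiv p$ on $U$ and $f\equiv q$ on $W$. Then the numerator is simply $2\,d(p,q)^2$ (only the single edge $e$ contributes), while the denominator is $\frac{2\,d(p,q)^2}{m(\emptyset)}\big(\sum_{u\in U}m(u)\big)\big(\sum_{w\in W}m(w)\big)$. Splitting each of $U$ and $W$ into interior vertices ($m=d$) and leaves ($m=1$) and expanding the product gives exactly the four displayed terms; the computation is a few lines of geometric series, with no common-ancestor bookkeeping needed. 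The test function is two-valued and \emph{not} radial --- on the sphere $|x|=1$ it already takes both values --- which is precisely why it sees the exponential bottleneck that your radial ansatz misses.
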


\begin{proof}
We take an edge $e=\{ o,v\}\in E$ containing the center. 
The edge $e$ divides $V$ into two components $U$ containing $o$ and $W=U^c$. 
Let $f: V\to X$ be a mapping sending 
$U$ to $p$ and $W$ to $q$, where 
$p,q\in X$ are distinct points. 

The weight of each vertex is either $d$ or $1$, and 
we have 
\begin{align*}
|\{ u\in U\hspace{1mm}|\hspace{1mm}m(u)=d\}|
&=
\sum_{i=0}^{r-1}(d-1)^{i} 
=
\frac{(d-1)^{r}-1}{d-2} ,\\
|\{ u\in U\hspace{1mm}|\hspace{1mm}m(u)=1\}|
&=
(d-1)^r , \\
|\{ w\in W\hspace{1mm}|\hspace{1mm}m(w)=d\}|
&=\sum_{i=0}^{r-2}(d-1)^{i}
=
\frac{(d-1)^{r-1}-1}{d-2}\frac{(d-1)^{r}-1}{d-2} ,\\
|\{ w\in W\hspace{1mm}|\hspace{1mm}m(u)=1\}|
&=
(d-1)^{r-1} . 
\end{align*}
Thus, we have
\begin{align*}
\frac{1}{m(\emptyset )}&\sum_{x,y\in V}m(x)m(y) d(f(x),f(y))^2 \\
&=
\frac{d-2}{2d\left\{ (d-1)^{r}-1\right\}} \times
2d(p,q)^2\bigg\{
d^2 \frac{\left\{ (d-1)^{r-1}-1\right\}\left\{ (d-1)^r -1\right\}}{(d-2)^2} \\
&\hspace{5mm}+
d \frac{ (d-1)^{r-1}\left\{ (d-1)^r -1\right\}}{d-2}
+d \frac{\left\{ (d-1)^{r-1}-1\right\} (d-1)^r}{d-2}
+(d-1)^{2r-1} \bigg\} \\
&=
d(p,q)^2
\bigg\{
\frac{d\left\{ (d-1)^{r-1}-1\right\}}{d-2}
+(d-1)^{r-1} \\
&\hspace{5mm}
+\frac{\left\{ (d-1)^{r-1}-1\right\} (d-1)^r}{(d-1)^{r}-1}
+\frac{(d-2)\left\{(d-1)^{2r-1}\right\}}{d\left\{ (d-1)^{r}-1\right\}} \bigg\} .
\end{align*}
On the other hand, 
\begin{equation*}
\sum_{x,y\in V}m(x,y) d(f(x),f(y))^2
=
2 d(p,q)^2 .
\end{equation*}
Hence, 
\begin{multline*}
\frac{\sum_{x,y\in V}m(x,y) d(f(x),f(y))^2}{
\frac{1}{m(\emptyset )}\sum_{x,y\in V}m(x)m(y) d(f(x),f(y))^2} \\
=
2\bigg\{
\frac{d\left\{ (d-1)^{r-1}-1\right\}}{d-2}
+(d-1)^{r-1}+\hspace{5cm}\\
\frac{\left\{ (d-1)^{r-1}-1\right\} (d-1)^r}{(d-1)^{r}-1}
+\frac{(d-2)\left\{(d-1)^{2r-1}\right\}}{d\left\{ (d-1)^{r}-1\right\}}\bigg\}^{-1}, 
\end{multline*}
which proves the proposition. 
\end{proof}

The following corollary is straightforward from 
Proposition \ref{from-below-prop} and \ref{from-above-prop}. 

\begin{corollary}\label{tree-order}
Let $X$ be any metric space, 
and let $d\geq 3$. 
Then, we have    
$$
\lambda(T_{d,r},X)\asymp_d \frac{1}{(d-1)^r}.
$$
\end{corollary}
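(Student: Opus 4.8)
The plan is to observe that Corollary \ref{tree-order} is purely a matter of reconciling the two bounds already proved. Proposition \ref{from-below-prop} gives $\lambda(T_{d,r},X)\ge\frac{d-2}{d^2(d-1)}\cdot\frac{(d-1)^r-1}{(d-1)^r}\cdot(d-1)^{-r}$, and the factor $\frac{(d-1)^r-1}{(d-1)^r}$ lies in $[\tfrac12,1)$ for all $d\ge 3$ and $r\ge 1$; hence the whole prefactor is bounded below by a positive constant $c_d$ depending only on $d$, and we conclude $\lambda(T_{d,r},X)\gtrsim_d (d-1)^{-r}$.

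For the reverse inequality, I would read off the leading-order behavior of the right-hand side of Proposition \ref{from-above-prop}. The four summands inside the braces are, respectively, of order $(d-1)^{r-1}$, exactly $(d-1)^{r-1}$, of order $(d-1)^{r-1}$ (since $\frac{\{(d-1)^{r-1}-1\}(d-1)^r}{(d-1)^r-1}\asymp_d (d-1)^{r-1}$), and of order $(d-1)^{2r-1}/(d-1)^r=(d-1)^{r-1}$. So the bracketed sum is $\asymp_d (d-1)^r$, and therefore its reciprocal, times the constant $2$, is $\asymp_d (d-1)^{-r}$. This yields $\lambda(T_{d,r},X)\lesssim_d (d-1)^{-r}$.

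Combining the two displays gives $\lambda(T_{d,r},X)\asymp_d (d-1)^{-r}$, which is the claim. The only mildly delicate point is confirming that each of the four terms in Proposition \ref{from-above-prop} is genuinely $\Theta_d((d-1)^r)$ — in particular that none of them is of strictly larger order (the fourth term has $(d-1)^{2r-1}$ in the numerator, but dividing by $(d-1)^r-1\asymp_d(d-1)^r$ brings it back down) and that the constants hidden in $\asymp_d$ are uniform in $r$; this is routine since $(d-1)^{r-1}-1\ge\tfrac12(d-1)^{r-1}$ and $(d-1)^r-1\ge\tfrac12(d-1)^r$ for $r\ge1$, $d\ge3$.

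\begin{proof}
By Proposition \ref{from-below-prop}, for all $d\ge 3$ and $r\ge 1$,
$$
\lambda(T_{d,r},X)\ge\frac{d-2}{d^2(d-1)}\cdot\frac{(d-1)^r-1}{(d-1)^r}\cdot(d-1)^{-r}\ge\frac{d-2}{2d^2(d-1)}(d-1)^{-r},
$$
since $(d-1)^r-1\ge\frac12(d-1)^r$. Hence $\lambda(T_{d,r},X)\gtrsim_d(d-1)^{-r}$.

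For the upper bound, write $S$ for the sum in braces on the right-hand side of Proposition \ref{from-above-prop}, so that $\lambda(T_{d,r},X)\le 2S^{-1}$. Using $(d-1)^{r-1}-1\le(d-1)^{r-1}$ and $(d-1)^r-1\ge\frac12(d-1)^r$ we get
$$
\frac{d\{(d-1)^{r-1}-1\}}{d-2}\le\frac{d}{d-2}(d-1)^{r-1},\qquad
\frac{\{(d-1)^{r-1}-1\}(d-1)^r}{(d-1)^r-1}\le 2(d-1)^{r-1},
$$
$$
\frac{(d-2)(d-1)^{2r-1}}{d\{(d-1)^r-1\}}\le\frac{2(d-2)}{d}(d-1)^{r-1},
$$
and the second summand is exactly $(d-1)^{r-1}$. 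Summing, $S\le\big(\frac{d}{d-2}+1+2+\frac{2(d-2)}{d}\big)(d-1)^{r-1}\le\frac{C_d}{d-1}(d-1)^r$ for a constant $C_d$ depending only on $d$. Therefore $\lambda(T_{d,r},X)\le 2S^{-1}\le\frac{2(d-1)}{C_d}(d-1)^{-r}$, i.e. $\lambda(T_{d,r},X)\lesssim_d(d-1)^{-r}$.

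Combining the two estimates yields $\lambda(T_{d,r},X)\asymp_d(d-1)^{-r}$.
\end{proof}
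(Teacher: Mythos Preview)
Your overall strategy---combining the explicit bounds from Propositions \ref{from-below-prop} and \ref{from-above-prop}---is exactly what the paper intends, and your lower bound is correct. However, the formal proof of the upper bound has the inequality running the wrong way. You have $\lambda(T_{d,r},X)\le 2S^{-1}$, so to conclude $\lambda(T_{d,r},X)\lesssim_d(d-1)^{-r}$ you need a \emph{lower} bound on $S$, not an upper bound. From $S\le\frac{C_d}{d-1}(d-1)^r$ one only gets $2S^{-1}\ge\frac{2(d-1)}{C_d}(d-1)^{-r}$, which is useless here; the step ``Therefore $\lambda(T_{d,r},X)\le 2S^{-1}\le\frac{2(d-1)}{C_d}(d-1)^{-r}$'' is invalid.

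The fix is immediate: since all four summands in $S$ are nonnegative, the second term alone gives $S\ge(d-1)^{r-1}=\tfrac{1}{d-1}(d-1)^r$, hence $\lambda(T_{d,r},X)\le 2S^{-1}\le 2(d-1)\cdot(d-1)^{-r}$, which is the desired $\lesssim_d(d-1)^{-r}$. (Incidentally, your pre-proof remark that $(d-1)^{r-1}-1\ge\tfrac12(d-1)^{r-1}$ for all $r\ge1$ fails at $r=1$, but you do not actually need that inequality once the direction is corrected.)
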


\begin{remark}\label{path-graph-remark}
When $d=2$, $T_{2,r}$ is just a path graph with $2r+1$ vertices. 
In this case nonlinear spectral gaps are never 
less than the linear spectral gap as we see below. 
Let $P_n=(V,E)$ be a path graph with $n+1$ vertices.  
More precisely, suppose that $V=\{ v_0 ,\ldots , v_{n}\}$ and 
$(v_i ,v_j) \in\vec{E}$ if and only if $|i-j|=1$. 
Let $m$ be an arbitrary weight function on $P_n$, and let  
$(X,d)$ be a metric space. 
For any $f: V\to X$, we define a 
map $\varphi_f : V\to \mathbb{R}$ by setting 
$$
\varphi_f (v_i )
=
\begin{cases}
0,&\quad \textrm{if}\quad i=0, \\
\sum_{1\leq l\leq i} d( f(v_{l-1}),f(v_l) )
,&\quad \textrm{if}\quad 2\leq i\leq n .
\end{cases}
$$
Then by the triangle inequality we have 
\begin{align*}
\frac{1}{m (\emptyset )}\sum_{x,y\in V}m(x)& m(y) d( f(x), f(y))^2 \\
&\leq
\frac{1}{m (\emptyset )}\sum_{x,y\in V}m(x) m(y) | \varphi_f (x)-\varphi_f (y) |^2  \\
&\leq
\frac{1}{\mu_1 (P_n )}
\sum_{x,y\in V}m(x,y) | \varphi_f (x)-\varphi_f (y) |^2 \\
&=
\frac{1}{\mu_1 (P_n )}
\sum_{x,y\in V}m(x,y)d( f(x), f(y))^2 ,
\end{align*}
which implies 
$$
\lambda (P_n , X) \geq\mu_1 (P_n ) .
$$
We remark that if the weight function $m$ is uniform, it is known that 
$$
\mu_1 (P_n ) =1-\cos\frac{\pi}{n}.
$$
\end{remark}

\begin{acknowledgement}
The authors would like to 
thank Professor Hiroyasu Izeki for 
valuable discussions concerning 
Proposition \ref{izeki-estimate}. 
We are also grateful to the anonymous referee for their careful reading of this paper and many 
helpful suggestions. 
The first author was supported by JST, CREST, ``A mathematical challenge 
to a new phase of material sciences''. 
\end{acknowledgement}

\paragraph{\textbf{Added in proof}}
After we have completed this work, we found a uniform lower estimate of the nonlinear spectral gap for a regular graph 
has been obtained by Mendel and Naor \cite[Lemma 2.1]{MN2}. 
According to their estimate we have 
$$
\lambda(H_n , X)\ge\frac{1}{n4^n}.
$$
However, this is not sharp by Proposition \ref{hamming-prop}.
Their estimate cannot be applied for $T_{d,r}$ 
since this is not a regular graph.

\end{document}